\theoremstyle{plain}
\newtheorem{theorem}{Theorem}[section]
\newtheorem{lemma}[theorem]{Lemma}
\newtheorem{claim}[theorem]{Claim}
\newtheorem{corollary}[theorem]{Corollary}
\theoremstyle{definition}
\newtheorem{example}[theorem]{Example}
\newtheorem{remark}[theorem]{Remark}
\title{Minimizing Submodular Functions over Hierarchical Families}
\author{Ryuhei Mizutani\thanks{Faculty of Science and Technology, Keio University, Kanagawa, 223-8522, Japan. E-mail: \texttt{mizutani@math.keio.ac.jp}}}
\begin{document}

\maketitle

\begin{abstract}
This paper considers submodular function minimization (SFM) restricted to a family of subsets.
We show that SFM over the complements of families with certain hierarchical structures can be solved in polynomial-time.
This yields a polynomial-time algorithm for SFM over the complements of various families, such as intersecting families, crossing families, and the unions of lattices.
Moreover, this tractability result partially settles the open question posed by N\"{a}gele, Sudakov, and Zenklusen on polynomial-solvability of SFM over the intersection of parity families.
Furthermore, our tractability result implies that for a constant positive integer $k$, the $k$-th smallest
value of a submodular function can be obtained in polynomial-time.
\end{abstract}

\section{Introduction}
\label{sec:intro}
Let $V$ be a finite set.
A set function $f:2^V\to \mathbb{Z}$ is called \textit{submodular} if $f(X)+f(Y)\ge f(X\cup Y)+f(X\cap Y)$ holds for all $X,Y\subseteq V$.
It is a central result in combinatorial optimization that submodular functions can be minimized in polynomial-time in the evaluation oracle model \cite{grotschel1981,iwata2001,schrijver2000}.

While submodular function minimization (SFM) is efficiently solvable, polynomial-solvability of SFM restricted to a set family is known only for a few classes of families.
Classical examples of such families are intersecting and crossing families (see, e.g., \cite[Volume B]{schrijver2003}).
As another tractable case, Goemans and Ramakrishnan \cite{goemans1995} showed that SFM can be solved in polynomial-time over parity families.
Their result generalizes polynomial-solvability of SFM over the families of all odd sets and all even sets, and more generally, over triple families \cite{grotschel1981}. 
N\"{a}gele, Sudakov, and Zenklusen \cite{nagele2019} pursued a different extension of the odd/even-cardinality constraint to the congruency constraint, showing that SFM over all sets of cardinality $r$ mod $m$, where $m$ is a constant prime power, can be solved in polynomial-time.
In this paper, we introduce a new class of families, which includes complements of various families such as intersecting families, crossing families, and the unions of lattices.
We show that SFM over such  families can be solved in polynomial-time, which partially settles the open question posed by N\"{a}gele, Sudakov, and Zenklusen \cite{nagele2019}.

To provide a new tractable class of families, we introduce some definitions.
A set family $\mathcal{F}\subseteq 2^V$ is called a \textit{lattice} if for all $X,Y\in \mathcal{F}$, we have $X\cup Y,X\cap Y\in \mathcal{F}$.
For a positive integer $k$, we call a family $\mathcal{H}\subseteq 2^V$ a \textit{$k$-hierarchical lattice} if there exists a chain $\mathcal{C}$ of families
\begin{align*}
\mathcal{C}: \emptyset=\mathcal{H}_0\subseteq \mathcal{H}_1\subseteq \cdots \subseteq \mathcal{H}_k=\mathcal{H}
\end{align*}
such that
\begin{align*}
(*)\qquad
&\mathrm{for\ all\ }i=1,\ldots,k\ \mathrm{and\ }X,Y\in \mathcal{H}_i\setminus \mathcal{H}_{i-1},\ \mathrm{we\ have\ that\ }X\cup Y,X\cap Y\in \mathcal{H}_i\setminus \mathcal{H}_{i-1},\ \\&\mathrm{or\ at\ least\ one\ of\ }X\cup Y, X\cap Y\ \mathrm{belongs\ to\ }\mathcal{H}_{i-1}.
\end{align*}
Note that if $k=1$, then a $k$-hierarchical lattice coincides with a lattice.
We provide several examples of $k$-hierarchical lattices.

\begin{example}[Intersecting family]
\label{ex1}
Let $\mathcal{F}\subseteq 2^V$ be an \textit{intersecting family}, i.e., for all $X,Y\in \mathcal{F}$ with $X\cap Y\ne \emptyset$, we have $X\cup Y,X\cap Y\in \mathcal{F}$.
Define $\mathcal{H}_1:=\{\emptyset\}$ and $\mathcal{H}_2:=\mathcal{F}\cup \{\emptyset\}$.
Then, $\mathcal{H}_1$ is a lattice, and for all $X,Y\in \mathcal{H}_2\setminus \mathcal{H}_1=\mathcal{F}\setminus \{\emptyset\}$, we have $X\cap Y=\emptyset\in \mathcal{H}_1$ or $X\cup Y,X\cap Y\in \mathcal{H}_2\setminus \mathcal{H}_1$.
Hence, $\mathcal{H}_2$ is a 2-hierarchical lattice.
\end{example}

\begin{example}[Crossing family]
\label{ex2}
Let $\mathcal{F}\subseteq 2^V$ be a \textit{crossing family}, i.e., for all $X,Y\in \mathcal{F}$ with $X\cap Y\ne \emptyset$ and $X\cup Y\ne V$, we have $X\cup Y,X\cap Y\in \mathcal{F}$.
Define $\mathcal{H}_1:=\{\emptyset,V\}$ and $\mathcal{H}_2:=\mathcal{F}\cup \{\emptyset,V\}$.
Then, $\mathcal{H}_1$ is a lattice, and for all $X,Y\in \mathcal{H}_2\setminus \mathcal{H}_1=\mathcal{F}\setminus \{\emptyset,V\}$, we have $X\cap Y=\emptyset\in \mathcal{H}_1$ or $X\cup Y=V\in \mathcal{H}_1$ or $X\cup Y,X\cap Y\in \mathcal{H}_2\setminus \mathcal{H}_1$.
Hence, $\mathcal{H}_2$ is a 2-hierarchical lattice.
\end{example}

\begin{example}[Union of $k$ lattices]
\label{ex3}
Let $\mathcal{L}_1,\mathcal{L}_2,\ldots,\mathcal{L}_k\subseteq 2^V$ be lattices.
Define $\mathcal{H}_i:=\mathcal{L}_1\cup \mathcal{L}_2\cup \cdots \cup \mathcal{L}_i$ for $i=1,2,\ldots,k$.
Then, $\mathcal{H}_1=\mathcal{L}_1$ is a lattice.
For all $X,Y\in \mathcal{H}_i\setminus \mathcal{H}_{i-1}=\mathcal{L}_i\setminus \mathcal{H}_{i-1}$ and $i=2,\ldots,k$, we have $X\cup Y,X\cap Y\in \mathcal{L}_i$ since $\mathcal{L}_i$ is a lattice.
This implies that $X\cup Y,X\cap Y\in \mathcal{L}_i\setminus \mathcal{H}_{i-1}$ or at least one of $X\cup Y,X\cap Y$ belongs to $\mathcal{H}_{i-1}$.
Hence, $\mathcal{H}_k$ is a $k$-hierarchical lattice.
\end{example}

It is well known that the minimizers of a submodular function form a lattice (see, e.g., \cite{fujishige2005}).
The following example shows that the family of subsets whose function values are at most the $k$-th smallest value of a submodular function is a $k$-hierarchical lattice.

\begin{example}[$k$-th smallest value of a submodular function]
\label{ex4}
Let $f:2^V\to \mathbb{Z}$ be a submodular function.
For $i=1,2,\ldots,k$, define $\mathcal{H}_i$ as the family of subsets $X\subseteq V$ whose function values $f(X)$ are at most the $i$-th smallest value of $f$.
Then, since the minimizers of $f$ form a lattice, $\mathcal{H}_1$ is a lattice.
For $i=2,\ldots,k$, take $X,Y\in \mathcal{H}_i\setminus \mathcal{H}_{i-1}$.
Then, the submodular inequality $f(X)+f(Y)\ge f(X\cup Y)+f(X\cap Y)$ implies that if $X\cup Y\notin \mathcal{H}_i\setminus \mathcal{H}_{i-1}$ or $X\cap Y\notin \mathcal{H}_i\setminus \mathcal{H}_{i-1}$, then at least one of $X\cup Y$ and $X\cap Y$ belongs to $\mathcal{H}_{i-1}$.
Hence, $\mathcal{H}_{k}$ is a $k$-hierarchical lattice.
\end{example}

In this paper, we consider SFM over the complement of a $k$-hierarchical lattice, that is, the problem of minimizing a submodular function over sets outside a $k$-hierarchical lattice.
We first establish structural theorems showing that a minimizer of a submodular function over the complement of a $k$-hierarchical lattice is also a minimizer over a certain lattice.
Then, we derive a polynomial-time algorithm for SFM over the complement of a $k$-hierarchical lattice from these structural theorems.
This yields polynomial-time algorithms for SFM over the complements of several families, such as an intersecting family, a crossing family, the union of lattices, and the family of sets whose function values are at most the $k$-th smallest value of a submodular function.

The rest of this paper is organized as follows.
Section \ref{sec:result} provides structural theorems on the minimizers of a submodular function over the complement of a $k$-hierarchical lattice, which lead to a polynomial-time algorithm for SFM over that family.
Section \ref{sec:proof} provides proofs of the results in Section \ref{sec:result}.
Section \ref{sec:app} gives applications of the polynomial-time algorithm in Section \ref{sec:result} to SFM over several families.

\section{Main Results}
\label{sec:result}
We call a set family $\mathcal{F}\subseteq 2^V$ the \textit{complement of a $k$-hierarchical lattice} if $2^V\setminus \mathcal{F}$ is a $k$-hierarchical lattice.
For disjoint subsets $S,T\subseteq V$, we define
\begin{align*}
[S,T]:=\{X\subseteq V\mid S\subseteq X\subseteq V\setminus T\}.
\end{align*}
Note that $[S,T]$ is a lattice for every disjoint subsets $S,T\subseteq V$.
Our main result is the following structural theorem on the minimizers of a submodular function over the complement of a $k$-hierarchical lattice.
\begin{theorem}
\label{thm:minimizer}
For a positive integer $k$, let $\mathcal{F}\subseteq 2^V$ be the complement of a $k$-hierarchical lattice.
Let $f:2^V\to \mathbb{Z}$ be a submodular function.
Let $X^*\in \mathcal{F}$ be a minimizer of $f$ over $\mathcal{F}$.
Then, there exist $S,T\subseteq V$ with $S\cap T=\emptyset$ and $\max\{|S|,|T|\}\le k$ such that $X^*$ is a minimizer of $f$ over the lattice $[S,T]$.
\end{theorem}

To obtain a polynomial-time algorithm for SFM over the complement of a $k$-hierarchical lattice, we show the following  slight refinement of Theorem \ref{thm:minimizer}.

\begin{theorem}
\label{thm:minimal_minimizer}
For a positive integer $k$, let $\mathcal{F}\subseteq 2^V$ be the complement of a $k$-hierarchical lattice.
Let $f:2^V\to \mathbb{Z}$ be a submodular function.
Let $X^*\in \mathcal{F}$ be a minimal minimizer of $f$ over $\mathcal{F}$.
Then, there exist $S,T\subseteq V$ with $S\cap T=\emptyset$ and $\max\{|S|,|T|\}\le k$ such that $X^*$ is the unique minimal minimizer of $f$ over the lattice $[S,T]$.
\end{theorem}

By Theorem \ref{thm:minimal_minimizer}, we can obtain a minimizer over the complement of a $k$-hierarchical lattice by solving SFM over $[S,T]$ for all $S,T\subseteq V$ with $S\cap T=\emptyset$ and $\max\{|S|,|T|\}\le k$.
This yields the following result, given a membership oracle for the complement of a $k$-hierarchical lattice.

\begin{theorem}
\label{thm:algorithm}
For a positive integer $k$, let $\mathcal{F}\subseteq 2^V$ be the complement of a $k$-hierarchical lattice.
Let $f:2^V\to \mathbb{Z}$ be a submodular function.
Suppose that we are given an oracle for testing whether $X\in \mathcal{F}$ or not for any given $X\subseteq V$.
Then, a minimizer of $f$ over $\mathcal{F}$ can be obtained in $O(n^{2k}(\tau(n)+\gamma))$ time, where $n:=|V|$, $\tau(n)$ denotes the time required for minimizing a submodular function on a ground set of size $n$, and $\gamma$ denotes the time required for one membership oracle query for $\mathcal{F}$.
\end{theorem}

\begin{figure}[tb]
  \centering
  \includegraphics[width=15cm]{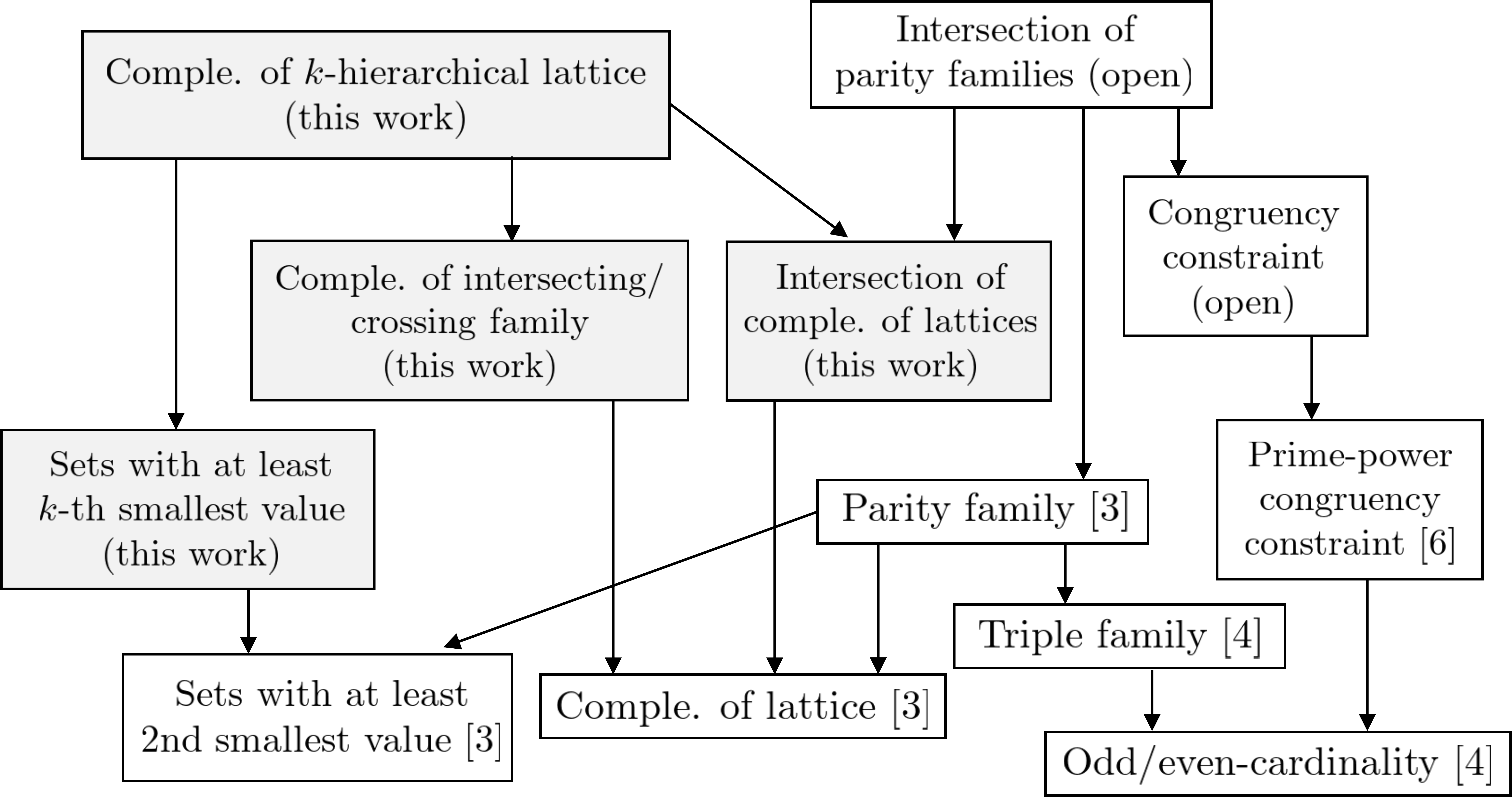}
  \caption{The inclusion relationship between various families mentioned in this paper. Arrow from Family A to Family B implies that Family A includes Family B as a special case. Polynomial-solvability of SFM over these families is shown in the references, or in this work, or still open.}
  \label{fig1}
\end{figure}

Since SFM can be solved in polynomial-time \cite{grotschel1981,iwata2001,schrijver2000}, $\tau(n)$ in Theorem \ref{thm:algorithm} is polynomial in $n$.
Hence, for a fixed positive integer $k$, SFM over the complement of a $k$-hierarchical lattice can be solved in polynomial-time by Theorem \ref{thm:algorithm}.

Since $k$-hierarchical lattices include several families as shown in Examples \ref{ex1}--\ref{ex4}, Theorem~\ref{thm:algorithm} implies polynomial-solvability of SFM over the complement of these families, such as the complement of an intersecting family, the complement of a crossing family, the intersection of the complements of lattices, and the family of sets whose function values are at least the $k$-th smallest value of a submodular function.
See Figure \ref{fig1} for the relationship between these families.

Our approach is similar to that by Goemans and Ramakrishnan \cite{goemans1995} for parity families and by N\"{a}gele, Sudakov, and Zenklusen \cite{nagele2019} for congruency constraints.
Indeed, they showed the existence of disjoint sets $S$ and $T$ of constant sizes such that a (minimal) minimizer of a submodular function over $[S,T]$ is also a minimizer over a parity family or under the prime-power congruency constraint.
This existence implies that SFM over a parity family or under the prime-power congruency constraint can be solved in polynomial-time via SFM over $[S,T]$.
We also adopt this approach for SFM over the complement of a $k$-hierarchical lattice; we show the existence of such $S$ and $T$ in Theorems \ref{thm:minimizer} and \ref{thm:minimal_minimizer}, and prove polynomial-solvability in Theorem \ref{thm:algorithm} via SFM over $[S,T]$. 

\section{Proofs}
\label{sec:proof}
In this section, we provide proofs of Theorems \ref{thm:minimizer}, \ref{thm:minimal_minimizer}, and \ref{thm:algorithm}.
Our proof strategies are similar to those presented in \cite{goemans1995,nagele2019} for parity families and congruency constraints.
We first show that for a (minimal) minimizer $X^*$ of $f$ over $\mathcal{F}$, there exist subsets $S\subseteq X^*$ and $T\subseteq V\setminus X^*$ of sizes at most $k$ such that all sets $Y$ with $S\subseteq Y\subseteq X^*$ or $X^*\subseteq Y\subseteq V\setminus T$ are in $\mathcal{F}$.
Using this fact, we show that $X^*$ is a minimizer (or the unique minimal minimizer) of $f$ over the lattice $[S,T]$.

Let $f:2^V\to \mathbb{Z}$ be a submodular function.
For a positive integer $k$, let $\mathcal{F}\subseteq 2^V$ be the complement of a $k$-hierarchical lattice.
Let $[k]:=\{1,2,\ldots,k\}$.
The following lemma plays a key role in proving our main theorems.

\begin{lemma}
\label{lem:key}
For any $X\in \mathcal{F}$, there exists $S\subseteq X$ such that $|S|\le k$ and $Y\in \mathcal{F}$ for all $Y\subseteq X$ with $S\subseteq Y$.
\end{lemma}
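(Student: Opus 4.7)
The plan is to construct the desired set $S$ greedily, adding at most one element per level of the hierarchical decomposition. Let $2^V \setminus \mathcal{F} = \mathcal{F}_1 \cup \cdots \cup \mathcal{F}_k$ be the partition guaranteed by the definition of a $k$-hierarchical lattice, and for each $i\in [k]$ set $\mathcal{B}_i := \{Y \subseteq X : Y \in \mathcal{F}_i\}$. Since $X\in \mathcal{F}$, we have $X\notin \mathcal{B}_i$ for every $i$. The conclusion of the lemma is equivalent to producing an $S \subseteq X$ with $|S|\le k$ that is not contained in any element of $\mathcal{B}_1\cup \cdots\cup \mathcal{B}_k$.

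I would build a chain $\emptyset = S_0 \subseteq S_1 \subseteq \cdots \subseteq S_k \subseteq X$ with $|S_i|\le i$ maintaining the invariant that $S_i \not\subseteq Y$ for every $Y \in \mathcal{B}_1\cup \cdots \cup \mathcal{B}_i$. At step $i$, define the ``still dangerous'' subfamily $\mathcal{B}_i' := \{Y \in \mathcal{B}_i : S_{i-1} \subseteq Y\}$. If $\mathcal{B}_i' = \emptyset$, set $S_i := S_{i-1}$; the invariant is preserved automatically because every $Y\in \mathcal{B}_i$ already satisfies $S_{i-1}\not\subseteq Y$. Otherwise, I would show (see the next paragraph) that $\mathcal{B}_i'$ is closed under union and intersection, let $M_i := \bigcup_{Y \in \mathcal{B}_i'} Y \in \mathcal{B}_i'$ be its (unique) maximum element, note that $M_i \subsetneq X$ because $X\notin \mathcal{F}_i$, choose any $s_i \in X \setminus M_i$, and set $S_i := S_{i-1} \cup \{s_i\}$. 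By construction $s_i$ witnesses $S_i \not\subseteq Y$ for every $Y \in \mathcal{B}_i'$, extending the invariant to level $i$.

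The crux is the claim that $\mathcal{B}_i'$ is closed under union and intersection, and it is the only place where the hierarchical-lattice condition enters. For $Y, Y' \in \mathcal{B}_i'$, both $Y \cup Y'$ and $Y \cap Y'$ are subsets of $X$ containing $S_{i-1}$. By the defining property of a $k$-hierarchical lattice, either both lie in $\mathcal{F}_i$ (in which case both lie in $\mathcal{B}_i'$, since they contain $S_{i-1}$ and are contained in $X$), or at least one of them lies in $\mathcal{F}_j$ for some $j<i$. In the latter case that set belongs to $\mathcal{B}_j$ and contains $S_{i-1} \supseteq S_j$, contradicting the invariant at level $j$. I expect ruling out this ``drop to a lower level'' to be the main obstacle, and the iterative choice of $s_1,\ldots,s_{i-1}$ is designed precisely to forbid it.

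Finally, the invariant at level $k$ gives $|S_k|\le k$, $S_k\subseteq X$, and $S_k \not\subseteq Y$ for every $Y\subseteq X$ with $Y\in 2^V\setminus \mathcal{F}$. Equivalently, every $Y$ satisfying $S_k\subseteq Y\subseteq X$ lies in $\mathcal{F}$, so $S:=S_k$ is the set promised by the lemma.
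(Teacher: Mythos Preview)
Your proof is correct and follows essentially the same approach as the paper: both build a chain $U_1\subseteq\cdots\subseteq U_k\subseteq X$ (your $S_1\subseteq\cdots\subseteq S_k$) by adding at level $i$ an element of $X$ not covered by any set in $\mathcal{F}_i$ that contains the current set, using the hierarchical condition to rule out a ``drop'' to a lower level via the already-established invariant. The only cosmetic differences are that the paper treats $|X|\le k$ separately and argues each step by contradiction, whereas you handle all sizes uniformly (allowing $S_i=S_{i-1}$ when $\mathcal{B}_i'=\emptyset$) and phrase the step constructively by showing $\mathcal{B}_i'$ is a lattice and picking $s_i$ outside its maximum element.
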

\begin{proof}
Take any $X\in \mathcal{F}$.
If $|X|\le k$, then setting $S=X$ satisfies the condition, and we are done.
Consider the case when $|X|\ge k+1$.
Since $2^V\setminus \mathcal{F}$ is a $k$-hierarchical lattice, there exist families $\emptyset=\mathcal{H}_0\subseteq \mathcal{H}_1\subseteq  \cdots \subseteq \mathcal{H}_k=2^V\setminus \mathcal{F}$ satisfying the condition $(*)$.
We prove the following claim by induction:
\begin{claim}
\label{clm:induction}
There exist $k$ distinct elements $v_1,v_2,\ldots,v_k\in X$ such that the following condition holds: for each $i\in [k]$, $\{v_1,v_2,\ldots,v_i\}\not\subseteq Y$ for all $Y\in \mathcal{H}_i\setminus \mathcal{H}_{i-1}$ with $Y\subseteq X$.
\end{claim}
\begin{proof}[Proof of Claim \ref{clm:induction}]
We inductively construct $v_1,v_2,\ldots,v_k$ satisfying the condition.
We first show the existence of $v_1\in X$ satisfying the condition.
Suppose to the contrary that for each $v\in X$, there exists $Y\in \mathcal{H}_1$ such that $Y\subseteq X$ and $v\in Y$.
Then, we have
\begin{align*}
\bigcup_{Y\in \mathcal{H}_1,Y\subseteq X}Y=X.
\end{align*}
Combined with the fact that $\mathcal{H}_1$ is a lattice, this implies $X\in \mathcal{H}_1$, which contradicts $X\in \mathcal{F}$.

We next show the existence of $v_2,\ldots,v_k\in X$ satisfying the condition.
As induction hypothesis, assume that there exist $v_1,v_2,\ldots,v_{j-1}$ satisfying the condition for some $2\le j\le k$.
For convenience, let $U_{j-1}:=\{v_1,v_2,\ldots,v_{j-1}\}$.
We show the existence of $v_j\in X$ satisfying the condition.
Suppose to the contrary that for each $v\in X\setminus U_{j-1}$, there exists $Y\in \mathcal{H}_j\setminus \mathcal{H}_{j-1}$ such that $U_{j-1}\cup \{v\}\subseteq Y\subseteq X$.
Define a subfamily $\mathcal{H}_j'$ of $\mathcal{H}_j\setminus \mathcal{H}_{j-1}$ as follows:
\begin{align*}
\mathcal{H}_j':=\{Y\in \mathcal{H}_j\setminus \mathcal{H}_{j-1}\mid U_{j-1}\subsetneq Y\subseteq X\}.
\end{align*}
By the assumption, for each $v\in X\setminus U_{j-1}$, there exists $Y\in \mathcal{H}_j'$ with $v\in Y$.
This implies
\begin{align*}
\bigcup_{Y\in \mathcal{H}_j'}Y=X.
\end{align*}
If $Y_1\cup Y_2\in \mathcal{H}_j\setminus \mathcal{H}_{j-1}$ for each pair of $Y_1,Y_2\in \mathcal{H}_j'$, then by the above equation we have $X\in \mathcal{H}_j\setminus \mathcal{H}_{j-1}$, a contradiction.
Hence, there exist $Y_1,Y_2\in \mathcal{H}_j'$ such that $Y_1\cup Y_2\notin \mathcal{H}_j\setminus \mathcal{H}_{j-1}$.
Since $\mathcal{H}_j\setminus \mathcal{H}_{j-1}$ satisfies the condition $(*)$, at least one of $Y_1\cup Y_2$ and $Y_1\cap Y_2$ belongs to $\mathcal{H}_{j-1}$.
Since $Y_1,Y_2\in \mathcal{H}_j'$, we have $U_{j-1}\subseteq Y_1\cap Y_2\subseteq Y_1\cup Y_2\subseteq X$, which implies that there exists $Z\in \mathcal{H}_{j-1}$ with $U_{j-1}\subseteq Z\subseteq X$.
This contradicts the induction hypothesis.
\end{proof}

Take distinct $k$ elements $v_1,v_2,\ldots,v_k\in X$ that satisfy the condition in Claim \ref{clm:induction}.
Then, we have $Y\notin \mathcal{H}_k$ for all $Y\subseteq X$ with $U_k:=\{v_1,v_2,\ldots,v_k\}\subseteq Y$.
This implies that $Y\in \mathcal{F}$ for all $U_k\subseteq Y\subseteq X$.
Hence, setting $S=U_k$ satisfies the condition in Lemma \ref{lem:key}.
\end{proof}

Similar as in the proofs in \cite{goemans1995,nagele2019}, the same argument works for the family consisting of the complements of all sets in $\mathcal{F}$ as follows.
Since $2^V\setminus \mathcal{F}$ is a $k$-hierarchical lattice, the family $\{V\setminus X\mid X\in 2^V\setminus \mathcal{F}\}$ is also a $k$-hierarchical lattice.
Hence, the family $\{V\setminus X\mid X\in \mathcal{F}\}$ is the complement of a $k$-hierarchical lattice.
Therefore, applying Lemma \ref{lem:key} to the family $\{V\setminus X\mid X\in \mathcal{F}\}$, we have the following:

\begin{lemma}
\label{lem:complement}
For any $X\in \mathcal{F}$, there exists $T\subseteq V\setminus X$ such that $|T|\le k$ and $Y\in \mathcal{F}$ for all $Y\supseteq X$ with $Y\subseteq V\setminus T$.
\end{lemma}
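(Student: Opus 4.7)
The plan is to reduce Lemma \ref{lem:complement} to Lemma \ref{lem:key} via the order-reversing involution $X \mapsto V\setminus X$. Concretely, I would first establish that this complementation operation preserves the property of being a $k$-hierarchical lattice, and then apply Lemma \ref{lem:key} to the complemented family and translate the conclusion back.

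First I would verify the structural preservation. Suppose $\mathcal{G}\subseteq 2^V$ is a $k$-hierarchical lattice with partition $\mathcal{G}_1,\ldots,\mathcal{G}_k$, and define $\mathcal{G}':=\{V\setminus X\mid X\in \mathcal{G}\}$ and $\mathcal{G}'_i:=\{V\setminus X\mid X\in \mathcal{G}_i\}$. By De Morgan's laws, $(V\setminus X)\cup (V\setminus Y)=V\setminus (X\cap Y)$ and $(V\setminus X)\cap (V\setminus Y)=V\setminus (X\cup Y)$. Hence $\mathcal{G}'_1$ is a lattice, and if for $X,Y\in\mathcal{G}_i$ we have $X\cup Y,X\cap Y\in\mathcal{G}_i$ or at least one of $X\cup Y,X\cap Y$ lies in $\mathcal{G}_1\cup\cdots\cup\mathcal{G}_{i-1}$, then the corresponding statement holds for $V\setminus X, V\setminus Y\in \mathcal{G}'_i$ with the roles of union and intersection swapped. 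Thus $\mathcal{G}'$ is a $k$-hierarchical lattice. Applying this to $\mathcal{G}=2^V\setminus\mathcal{F}$ and noting that $\{V\setminus X\mid X\in 2^V\setminus \mathcal{F}\}=2^V\setminus \{V\setminus X\mid X\in \mathcal{F}\}$, I conclude that $\mathcal{F}^{\mathrm{c}}:=\{V\setminus X\mid X\in \mathcal{F}\}$ is itself the complement of a $k$-hierarchical lattice.

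Now given $X\in \mathcal{F}$, I would set $X':=V\setminus X\in \mathcal{F}^{\mathrm{c}}$ and apply Lemma \ref{lem:key} (with $\mathcal{F}$ replaced by $\mathcal{F}^{\mathrm{c}}$) to $X'$. This yields $T\subseteq X'=V\setminus X$ with $|T|\le k$ such that every $Z\subseteq X'$ with $T\subseteq Z$ belongs to $\mathcal{F}^{\mathrm{c}}$. For any $Y\supseteq X$ with $Y\subseteq V\setminus T$, the set $Z:=V\setminus Y$ satisfies $Z\subseteq V\setminus X=X'$ and $T\subseteq Z$, so $Z\in \mathcal{F}^{\mathrm{c}}$, i.e., $Y=V\setminus Z\in \mathcal{F}$, as required.

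The only non-routine point is checking that complementation preserves the inductive hierarchical condition, and this is immediate from De Morgan's laws once the partition $\mathcal{G}'_i$ is defined set-wise. The rest is bookkeeping, so I do not expect any substantial obstacle beyond keeping the direction of the inclusions straight under complementation.
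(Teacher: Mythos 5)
Your argument is correct and is exactly the paper's route: the paper also deduces Lemma \ref{lem:complement} by observing that complementation $X\mapsto V\setminus X$ turns the $k$-hierarchical lattice $2^V\setminus\mathcal{F}$ into another $k$-hierarchical lattice (via De Morgan's laws) and then applies Lemma \ref{lem:key} to $\{V\setminus X\mid X\in\mathcal{F}\}$. Your write-up merely fills in the routine details that the paper leaves implicit, and these are all verified correctly.
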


We also prepare the following lemma.

\begin{lemma}[cf. Theorem 7.2 in Fujishige \cite{fujishige2005}]
\label{lem:fuji72}
A set $X^*\subseteq V$ is a minimizer of a submodular function $f:2^V\to \mathbb{Z}$ if and only if $X^*$ is a minimizer of $f$ over $\{Y\subseteq V\mid Y\subseteq X^*\mathrm{\ or\ }Y\supseteq X^*\}$.
\end{lemma}

We are now ready to prove Theorems \ref{thm:minimizer} and \ref{thm:minimal_minimizer}.
\begin{proof}[Proof of Theorem \ref{thm:minimizer}]
Take a minimizer $X^*\in \mathcal{F}$ of $f$ over $\mathcal{F}$.
By Lemmas \ref{lem:key} and \ref{lem:complement}, there exist $S,T\subseteq V$ such that
\begin{itemize}
    \item $S\subseteq X^*$ and $T\subseteq V\setminus X^*$,
    \item $\max\{|S|,|T|\}\le k$,
    \item $Y\in \mathcal{F}$ for all $Y\subseteq V$ with $S\subseteq Y\subseteq X^*$ or $X^*\subseteq Y\subseteq V\setminus T$.
\end{itemize}
By the third condition, $X^*$ is a minimizer of $f$ over $\{Y\in [S,T]\mid Y\subseteq X^*\ \mathrm{or\ }Y\supseteq X^*\}$.
Hence, by Lemma \ref{lem:fuji72}, $X^*$ is a minimizer of $f$ over $[S,T]$, as desired.
\end{proof}

\begin{proof}[Proof of Theorem \ref{thm:minimal_minimizer}]
Take a minimal minimizer $X^*\in \mathcal{F}$ of $f$ over $\mathcal{F}$.
By Lemmas \ref{lem:key} and \ref{lem:complement}, there exist $S,T\subseteq V$ such that
\begin{itemize}
    \item $S\subseteq X^*$ and $T\subseteq V\setminus X^*$,
    \item $\max\{|S|,|T|\}\le k$,
    \item $Y\in \mathcal{F}$ for all $Y\subseteq V$ with $S\subseteq Y\subseteq X^*$ or $X^*\subseteq Y\subseteq V\setminus T$.
\end{itemize}
Since $S\cap T=\emptyset$, it suffices to show that $X^*$ is the unique minimal minimizer of $f$ over the lattice $[S,T]$.
By Theorem \ref{thm:minimizer}, $X^*$ is a minimizer of $f$ over $[S,T]$.
We show that $X^*\subseteq Z$ holds for every minimizer $Z\in [S,T]$ of $f$ over $[S,T]$.
By the proof of Theorem \ref{thm:minimizer}, we have $X^*\cap Z\in \mathcal{F}$ and $f(X^*\cap Z)=f(X^*)$ for every minimizer $Z$ of $f$ over $[S,T]$.
Hence, if $X^*\not\subseteq Z$ for some minimizer $Z$ of $f$ over $[S,T]$, then we have $X^*\cap Z\subsetneq X^*$, which contradicts that $X^*$ is a minimal minimizer of $f$ over $\mathcal{F}$.
Thus, we have $X^*\subseteq Z$ for every minimizer $Z$ of $f$ over $[S,T]$, as desired.
\end{proof}

Theorem \ref{thm:algorithm} follows from Theorem \ref{thm:minimal_minimizer} as follows.
The following proof is essentially the same as that in \cite{nagele2019}; we include it for the reader’s convenience.
\begin{proof}[Proof of Theorem \ref{thm:algorithm}]
Define $g:2^V\to \mathbb{Z}$ as $g(X):=(n+1)f(X)+|X|$ for every $X\subseteq V$.
Note that since $f$ is submodular, $g$ is also submodular.
For every pair of sets $S,T\subseteq V$ with $S\cap T=\emptyset$ and $\max\{|S|,|T|\}\le k$, compute a minimizer of $g$ over $[S,T]$ by polynomial-time algorithms for submodular function minimization \cite{grotschel1981,iwata2001,schrijver2000}.
This procedure can be done in $O(n^{2k}\tau (n))$ time.
Note that since $f$ is integer-valued and minimizers of $f$ are closed under intersection, $g$ has the unique minimizer over $[S,T]$, which is the unique minimal minimizer of $f$ over $[S,T]$.
Hence, by Theorem~\ref{thm:minimal_minimizer}, there exist $S,T\subseteq V$ with $S\cap T=\emptyset$ and $\max\{|S|,|T|\}\le k$ such that the unique minimizer of $g$ over $[S,T]$ minimizes $f$ over $\mathcal{F}$.
Therefore, by checking whether the unique minimizer of $g$ over $[S,T]$ belongs to $\mathcal{F}$ for each $S,T\subseteq V$ with $S\cap T=\emptyset$ and $\max\{|S|,|T|\}\le k$, and comparing the function values of such minimizers belonging to $\mathcal{F}$, we can obtain a minimizer of $f$ over $\mathcal{F}$ in $O(n^{2k}(\tau (n)+\gamma))$ time.
\end{proof}

\begin{remark}
Fujishige and Mizutani \cite{fujishige2026} showed that Lemma \ref{lem:fuji72} can be extended to more general classes of set functions called {\bf (Q1)}-submodular functions and {\bf (Q2)}-submodular functions.
A set function $f:2^V\to \mathbb{Z}$ is called {\bf (Q1)}-\textit{submodular} if for any $X,Y\subseteq V$, we have $f(X)>f(X\cap Y)$ or $f(X\cup Y)\le f(Y)$, and $f$ is called {\bf (Q2)}-\textit{submodular} if for any $X,Y\subseteq V$, we have $f(X)\ge f(X\cap Y)$ or $f(X\cup Y)< f(Y)$.
Since the proof of Theorem \ref{thm:minimizer} uses the submodularity of $f$ only in Lemma \ref{lem:fuji72}, the extensions of Lemma \ref{lem:fuji72} imply that Theorem \ref{thm:minimizer} can be extended to the classes of {\bf (Q1)}- and {\bf (Q2)}-submodular functions.
Theorem \ref{thm:minimal_minimizer} can also be extended to the class of {\bf (Q2)}-submodular functions, and the maximal version of Theorem \ref{thm:minimal_minimizer} (obtained by replacing ``minimal'' with ``maximal'') can be extended to the class of {\bf (Q1)}-submodular functions.
Note that since no polynomial-time algorithm for minimizing {\bf (Q1)}- and {\bf (Q2)}-submodular functions is known, Theorem \ref{thm:minimal_minimizer} or its maximal version for {\bf (Q1)}- and {\bf (Q2)}-submodular functions does not imply polynomial-solvability of the minimization of such functions over the complement of a $k$-hierarchical lattice.
\end{remark}

\section{Applications}
\label{sec:app}
In this section, we provide applications of Theorem \ref{thm:algorithm}.
As shown in Section \ref{sec:intro}, $k$-hierarchical lattices include several families such as an intersecting family, a crossing family, the union of $k$ lattices, and the family of subsets whose function values are at most the  $k$-th smallest value of a submodular function.
Theorem \ref{thm:algorithm} implies that SFM over the complement of such families can be solved in polynomial-time, which we will discuss below.

We first show that Theorem \ref{thm:algorithm} implies a polynomial-time algorithm for SFM over the complement of an intersecting family or a crossing family.
For an intersecting family $\mathcal{F}\subseteq 2^V$, suppose that we are given a membership oracle for $\mathcal{F}$.
Since $\mathcal{F}\cup \{\emptyset\}$ is a 2-hierarchical lattice, we can find a minimizer of a submodular function $f$ over $2^V\setminus (\mathcal{F}\cup \{\emptyset\})$ in polynomial-time.
Hence, if $\emptyset\notin \mathcal{F}$, by comparing the minimum value of $f$ over $2^V\setminus (\mathcal{F}\cup \{\emptyset\})$ with $f(\emptyset)$, we obtain a minimizer over $2^V\setminus \mathcal{F}$; otherwise, a minimizer over $2^V\setminus (\mathcal{F}\cup \{\emptyset\})$ is already a minimizer over $2^V\setminus \mathcal{F}$.
Similarly, for a crossing family $\mathcal{F}$, since $\mathcal{F}\cup \{\emptyset, V\}$ is a 2-hierarchical lattice, we can find a minimizer of $f$ over $2^V\setminus (\mathcal{F}\cup \{\emptyset, V\})$ in polynomial-time if we are given a membership oracle for $\mathcal{F}$.
Hence, by comparing the minimum value with $f(\emptyset)$ and $f(V)$ as needed, we obtain a minimizer over $2^V\setminus \mathcal{F}$.
Therefore, we have the following:

\begin{corollary}
\label{cor:intersecting_crossing}
Let $f:2^V\to \mathbb{Z}$ be a submodular function.
Let $\mathcal{F}\subseteq 2^V$ be an intersecting family or a crossing family.
Suppose that we are given a membership oracle for $\mathcal{F}$.
Then, a minimizer of $f$ over $2^V\setminus \mathcal{F}$ can be obtained in polynomial-time.
\end{corollary}

A family $\mathcal{F}\subseteq 2^V$ is called a \textit{parity family} if for all $X,Y\in 2^V\setminus \mathcal{F}$, we have
\begin{align*}
(X\cup Y\in \mathcal{F}) \iff (X\cap Y\in \mathcal{F}).
\end{align*}
An important example of a parity family is the complement of a lattice.
Goemans and Ramakrishnan \cite{goemans1995} showed that SFM over a parity family can be solved in polynomial-time; in particular, this implies polynomial-solvability of SFM over the complement of a lattice.
Since lattices are a subclass of intersecting and crossing families, Corollary \ref{cor:intersecting_crossing} can be viewed as a generalization of this tractability result for the complement of a lattice.
See Figure \ref{fig1} for the relationship between these families.
Note that parity families are not included in the class of $k$-hierarchical lattices; thus, Theorem \ref{thm:algorithm} is not a strict generalization of the result by Goemans and Ramakrishnan for parity families.

As a common generalization of SFM over parity families and congruency constraints, N\"{a}gele, Sudakov, and Zenklusen \cite{nagele2019} posed the question of whether SFM over the intersection of a constant number of parity families can be solved efficiently.
Theorem \ref{thm:algorithm} settles this question in the case when each parity family in the intersection is the complement of a lattice.
Indeed, the intersection of the complements of $k$ lattices is the complement of the union of these $k$ lattices; moreover, this union forms a $k$-hierarchical lattice.
Hence, Theorem \ref{thm:algorithm} implies the following:

\begin{corollary}
\label{cor:parity_intersection}
Let $f:2^V\to \mathbb{Z}$ be a submodular function.
For a constant positive integer $k$, let $\mathcal{F}\subseteq 2^V$ be the intersection of the complements of $k$ lattices.
Suppose that we are given a membership oracle for $\mathcal{F}$.
Then, a minimizer of $f$ over $\mathcal{F}$ can be obtained in polynomial-time. 
\end{corollary}

See Figure \ref{fig1} for the relationship between the families mentioned above.
While $k$-hierarchical lattices and prime-power congruency constrained set families are incomparable under inclusion, both our algorithm and the algorithm of N\"{a}gele, Sudakov, and Zenklusen rely on the existence of disjoint sets $S,T$ of constant sizes such that a (minimal) minimizer of a submodular function over $[S,T]$ is also a minimizer over the complement of a $k$-hierarchical lattice or under the prime-power congruency constraint.
Since no intersection of parity families is known that fails to admit such $S$ and $T$, extending this property to the intersection of parity families is a possible direction to derive a polynomial-time algorithm for SFM over that class.

As a corollary of polynomial-solvability of SFM over parity families, Goemans and Ramakrishnan \cite{goemans1995} showed that the second smallest value of a submodular function can be obtained in polynomial-time.
For a fixed positive integer $k$, Vazirani and Yannakakis \cite{vazirani1992} showed that the $k$-th smallest value of $s$-$t$ cuts of a directed or undirected graph can be computed in polynomial-time.
They showed that there exist disjoint vertex sets $S,T$ of sizes at most $k-1$ such that a minimum cut separating $S\cup \{s\}$ and $T\cup \{t\}$ attains the $k$-th smallest value of $s$-$t$ cuts, using the representation of all the minimum $s$-$t$ cuts via a partial order among the strongly connected components in the residual graph.
Since the family of subsets whose function values are at most the $k$-th smallest value of a submodular function forms a $k$-hierarchical lattice, Theorem~\ref{thm:algorithm} implies a generalization of the result by Vazirani and Yannakakis to the setting of submodular functions:

\begin{corollary}
Let $f:2^V\to \mathbb{Z}$ be a submodular function.
For a constant positive integer $k$, a set attaining the $k$-th smallest value of $f$ can be obtained in polynomial-time.
\end{corollary}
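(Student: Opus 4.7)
The plan is to proceed by induction on $k$, with the base case $k=1$ being standard submodular function minimization. For the inductive step, I assume that the $(k-1)$-th smallest value $v_{k-1}$ of $f$ has been computed in polynomial time (together with a witness set attaining it), and I describe how to obtain a set attaining the $k$-th smallest value $v_k$ in polynomial time.

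The key observation is that by Example 1.4 applied with parameter $k-1$, the family $\mathcal{G} := \{X\subseteq V \mid f(X)\le v_{k-1}\}$ of subsets whose $f$-values are among the $(k-1)$ smallest distinct values forms a $(k-1)$-hierarchical lattice. Hence its complement $\mathcal{F} := \{X\subseteq V \mid f(X)\ge v_k\}$ is the complement of a $(k-1)$-hierarchical lattice, and a minimizer of $f$ over $\mathcal{F}$ is precisely a set attaining $v_k$.

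Next I would apply Theorem \ref{thm:algorithm} to this $\mathcal{F}$. Inspecting the underlying algorithm, namely iterating over pairs $(S,T)$ with $S\cap T=\emptyset$ and $\max\{|S|,|T|\}\le k-1$, computing the unique minimal minimizer of $f$ over each lattice $\mathcal{F}_{ST}$ by standard (unrestricted) SFM restricted to a sublattice (as granted by Theorem \ref{thm:minimal_minimizer}), and retaining the candidate of smallest $f$-value that lies in $\mathcal{F}$, one sees that $\mathcal{F}$ is accessed only through a membership oracle. Such an oracle is available here via a single evaluation of $f$ followed by a comparison with the known value $v_{k-1}$, so the total running time stays $O(n^{2(k-1)}\tau(n))$, which is polynomial for constant $k$.

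The main point to verify is that the algorithm of Theorem \ref{thm:algorithm} can indeed be implemented using only a membership oracle for $\mathcal{F}$ rather than an explicit description of the hierarchical-lattice partition; this should follow directly from the algorithmic discussion just after Theorem \ref{thm:minimal_minimizer}. A minor bookkeeping point is that if $f$ has fewer than $k$ distinct values, then the inductive call to Theorem \ref{thm:algorithm} returns no feasible solution, correctly reporting that $v_k$ does not exist.
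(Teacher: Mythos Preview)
Your proposal is correct and matches the paper's (implicit) argument: the paper does not spell out a proof of this corollary but simply observes that Example~1.4 together with Theorem~\ref{thm:algorithm} yields it, and your inductive scheme is exactly the natural way to make that precise. Your remark that the algorithm of Theorem~\ref{thm:algorithm} only needs a membership oracle for $\mathcal{F}$, available here via comparison with the already computed $v_{k-1}$, is the one nontrivial point, and it is indeed justified by the proof of Theorem~\ref{thm:algorithm}.
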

\begin{proof}
We prove by induction on $k$.
The case of $k=1$ is done by polynomial-time algorithms for SFM.
Consider the case when $k\ge 2$.
Let $\mathcal{F}\subseteq 2^V$ be the family of subsets whose function values are at most the $(k-1)$-th smallest value of $f$.
Since $\mathcal{F}$ is a $(k-1)$-hierarchical lattice, Theorem \ref{thm:algorithm} implies that SFM over $2^V\setminus \mathcal{F}$ can be solved in polynomial-time if we can determine in polynomial-time whether any given $X\subseteq V$ belongs to $\mathcal{F}$ or not.
Hence, it suffices to show whether any given $X\subseteq V$ belongs to $\mathcal{F}$ can be determined in polynomial-time.
By induction hypothesis, a set attaining the $i$-th smallest value of $f$ can be obtained in polynomial-time for each $i=1,2,\ldots,k-1$.
Hence, for any given $X\subseteq V$, whether $X\in \mathcal{F}$ can be determined in polynomial-time.
\end{proof}

\section*{Acknowledgements}
We are grateful to Satoru Fujishige for valuable comments and suggestions.
This work was supported by JST ERATO Grant Number JPMJER2301, Japan.

\bibliography{main}
\bibliographystyle{plain}

\end{document}